\documentclass[11pt]{amsart} 
\usepackage{amsfonts} 
\usepackage{graphicx}
\usepackage{color}
\usepackage{pinlabel}
\usepackage[colorlinks=true, linkcolor=blue, citecolor=blue]{hyperref}

\author{Mokhtar Aouina}
\address{Western Illinois University\\
Macomb, Illinois, 61438}
\email{m-aouina@wiu.edu}
\author{Hamed Karami}
\address{Georgia State University\\
Atlanta, Georgia, 30303}
\email{hkarami1@student.gsu.edu}
\author{Douglas J. LaFountain}
\address{Western Illinois University\\
Macomb, Illinois, 61438}
\email{d-lafountain@wiu.edu}

\setlength{\oddsidemargin}{.5cm} 
\setlength{\evensidemargin}{0in} 
\setlength{\textwidth}{6in} 
\setlength{\textheight}{22cm} 
\setlength{\headheight}{0cm} 
\setlength{\topmargin}{0cm}
\setlength{\parindent}{25pt} 
\setlength{\parskip}{1.0mm}

\newtheorem{theorem}{Theorem}[section]
\newtheorem{proposition}[theorem]{Proposition}
\newtheorem{lemma}[theorem]{Lemma}

\theoremstyle{definition}

\newtheorem{remark}[theorem]{Remark}

\renewenvironment{proof}{{\noindent\bf Proof.}}{\hfill $\Box$\par\vskip3mm}

\begin{document}

\title{Distance-balancing of cube-connected cycles graphs}
%\date{\today}
\maketitle

\begin{abstract}
We show that cube-connected cycles graphs $CCC_n$ are distance-balanced, and nicely distance-balanced if and only if $n$ is even.
 \\
\ \\
 AMS 2020 Subject Classification: 05C12, 05C15
\\
\ \\
  \textit{Keywords:} distance-balanced; nicely distance-balanced; cube-connected cycles graphs
\end{abstract}

\section{Introduction}
The concept of distance-balanced graphs was implicitly introduced by Handa, where he conjectured that a bipartite distance-balanced graph which is not a cycle must be 3-connected \cite{1}. However, this term itself is due to the work of Jerebic, Klav\v{z}ar and Rall, who considered distance-balanced graphs in terms of different graph products \cite{2}. Since then, much work has been done to find interesting properties of these graphs \cite{5,7,3}, including to see if a given family of graphs is distance-balanced or not.  This line of inquiry has previously been fruitful for generalized Petersen graphs $GP(n,k)$ and Hamming graphs $H(n,q)$ \cite{8,4,9}.  For example, Jerebic, Klav\v{z}ar and Rall originally conjectured that for any $k \geq 2$ there exists an $N$ for which $GP(n,k)$ is not a distance-balanced graph for all $n \geq N$ \cite{2}; this conjecture was proved by Yang, Hou, Li and Zhong \cite{16}. In addition, Frelih generalized the concept of distance-balanced graphs to  $\ell$-distance-balanced graphs where $\ell$ is a positive integer and a lower bound for the diameter of the graph \cite{19}.  Using this definition, Ma, Wang and Klav\v{z}ar partially solved a conjecture of Miklavi\v{c} and Sparl \cite{18} by proving that for large $n$ compared to $k$, the generalized Petersen graph $GP(n, k)$ is $\textrm{diam}(GP(n, k))$-distance-balanced \cite{17}.

Our paper continues in this direction by studying a given family of graphs to see if it is distance-balanced or not, as well as an additional property, first introduced by Kutnar and Miklavi\v{c}, of being nicely distance-balanced \cite{3}.  Specifically, we consider the distance-balanced and nicely distanced-balanced properties of cube-connected cycles graphs, denoted $CCC_n$.  Cube-connected cycles are a family of graphs introduced by Preparata and Vuillemin for use as a network topology in parallel computing \cite{11}. These graphs are 3-regular, connected and Cayley \cite{12}, and properties such as their diameter and the existence of cycles of all lengths have been studied \cite{14, 13, 15}.  In this note, we show that all cube-connected cycles graphs are distance-balanced, but nicely distanced-balanced if and only if they are derived from a hypercube in even dimensions.  Since distance-regular graphs are nicely distance-balanced \cite{6,3}, this also shows that cube-connected cycles graphs are not distance-regular when formed from hypercubes in odd dimensions.

The structure of the paper is as follows.  In Section \ref{sec:prelim} we review the definitions of our objects of interest, and also establish properties of the automorphism group of cube-connected cycles that allow us to conclude in Proposition \ref{prop:db} that $CCC_n$ is distance-balanced for all $n$.  In Section \ref{sec:one} we then introduce a new way to represent paths in $CCC_n$, which is then applied in Section \ref{sec:two} to prove our main Theorem \ref{thm:ndb} that $CCC_n$ is nicely distance-balanced if and only if $n$ is even.

\section{Distance-balancing in $CCC_n$}
\label{sec:prelim}

We begin with the definitions of the objects and properties of interest.  Let $G$ be a finite graph with vertices $V$ and edges $E$.  Given an edge $uv \in E$, let $W_{u,v} = \{ x \in V \ : \ d(x,u) < d(x,v)\}$, and similarly  $W_{v,u} = \{ x \in V \ : \ d(x,v) < d(x,u)\}$.  Also denote $W^v_u = \{ x \in V \ : \ d(x,u) = d(x,v)\}$.  Then $G$ is said to be {\em distance-balanced} if for any edge $uv \in E$,  $|W_{u,v}| = |W_{v,u}|$.  $G$ is said to be {\em nicely distance-balanced} if there exists a constant $k \in \mathbb{N}$ such that for any edge $uv \in E$,  $|W_{u,v}| = |W_{v,u}|=k$.  

The {\em hypercube graph} $Q_n$ is the graph whose vertices and edges correspond combinatorially to those of the unit hypercube in $\mathbb{R}^n$.  Notationally, the vertices of $Q_n$ are binary strings of length $n$, and edges connect two vertices whose strings differ in just one digit.  The labeling of vertices of $Q_n$ is not unique; in particular, we have the following initial lemma which is common knowledge, but which nevertheless will be useful in what follows.

\begin{lemma}
\label{lem:labelcube}
A choice of vertex $u$ in $Q_n$ as $00\cdots0$, along with a choice of labeling for the $n$ vertices adjacent to $u$, determines the labeling of all vertices in $Q_n$.
\end{lemma}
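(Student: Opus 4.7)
The plan is to proceed by induction on the distance from $u = 00\cdots 0$. Given the hypothesized labeling of the $n$ neighbors $v_1, \ldots, v_n$ of $u$, where $v_i$ has a $1$ in position $i$, I would call $i$ the \emph{direction} of the edge $uv_i$. The goal is then to show that every edge of $Q_n$ inherits a well-defined direction in $\{1, \ldots, n\}$, and that the label of any vertex $w$ is the XOR (sum in $\mathbb{F}_2^n$) of the standard basis vectors $e_i$ along any path from $u$ to $w$; this immediately determines the labeling of $Q_n$.

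For the inductive step, I would use the geometric fact that every 4-cycle in $Q_n$ arises from flipping exactly two coordinates, so opposite edges of any 4-cycle flip the same coordinate. Suppose inductively that all vertices at distance at most $k$ from $u$ have determined labels and that all edges among them carry determined directions. Given a vertex $y$ at distance $k+1$ adjacent to some distance-$k$ vertex $x$, I would take any 4-cycle $u' x y y'$ in $Q_n$ where $u'$ is a neighbor of $x$ at distance $k-1$ (or $u$ itself in the base case) with edge $u'x$ of known direction $j$. The opposite-edges property then forces $xy$ to have direction $j'$ for some uniquely determined $j'$, and hence the label of $y$ equals that of $x$ with the $j'$-th bit toggled. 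Combined with the base case (directions at $u$ given by the hypothesis), this propagates labels outward layer by layer.

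The main obstacle is well-definedness: a vertex $y$ at distance $k+1$ typically has several neighbors at distance $k$, and I must confirm that they all force the same label on $y$, or equivalently that the XOR of directions along a path from $u$ to $y$ is path-independent. I would handle this by invoking the fact that $Q_n$ is the Cayley graph of the abelian group $\mathbb{F}_2^n$ on the standard generating set, so any two paths with the same endpoints differ by a sequence of 4-cycle swaps, each of which preserves the XOR by the opposite-edge property above. Once well-definedness is in hand, the induction closes and the labeling of $Q_n$ is determined by the choices at $u$ and its neighbors.
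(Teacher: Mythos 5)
Your argument is essentially correct, but it takes a genuinely different route from the paper. The paper's proof is group-theoretic and global: it cites $|\mathrm{Aut}(Q_n)|=2^n n!$ and exhibits the translation subgroup $H$ (order $2^n$) and the coordinate-permutation subgroup $K$ (order $n!$) with $H\cap K$ trivial, so that $\mathrm{Aut}(Q_n)=HK$ and the $2^n\cdot n!$ choices of base vertex plus neighbor labeling exactly exhaust the automorphisms. Your proof is local and constructive: you propagate the labeling outward by induction on distance, using the rigidity of $4$-cycles. Your approach is more elementary and self-contained (it needs no external facts about $\mathrm{Aut}(Q_n)$), and it makes visible \emph{why} the label of each vertex is forced; the paper's approach is shorter given the cited facts and, importantly for the later arguments (Lemma 2.2 and Propositions 2.3--2.4), it simultaneously delivers \emph{existence} of a labeling realizing every choice, not just uniqueness. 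Two points in your write-up deserve tightening. First, the $4$-cycle bookkeeping is slightly off: in the square $u'\,x\,y\,y'$ the edge opposite to $xy$ is $y'u'$, not $u'x$, so the direction of $xy$ is inherited from $u'y'$ (both of whose endpoints lie at distance at most $k$, hence have known labels by induction), rather than from $u'x$ directly. Second, note that for the statement as written --- that the partial data \emph{determines} the labeling, i.e.\ uniqueness --- you do not actually need path-independence: it suffices that each vertex's label is forced by at least one chain of squares. The well-definedness argument (paths related by square moves, since the cycle space of $Q_n$ is generated by $4$-cycles) is only needed if you also want to reconstruct existence of the labeling, and that claim, while true, is the one nontrivial fact you assert without proof.
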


\begin{proof}
The automorphism group Aut($Q_n$) has order $2^nn!$ (see for example \cite{21}), and each automorphism corresponds to a distinct labeling of the vertices of $Q_n$.   Specifically, there are $2^n$ vertices in $Q_n$, and by Lemma 3.1.1 in \cite{20} there is a subgroup $H$ of Aut$(Q_n)$, where $|H|=2^n$ and each element of $H$ is translation by a fixed binary $n$-string; this establishes vertex-transitivity of $Q_n$, and allows us to label any vertex as $00\cdots0$.  Moreover, the discussion following Lemma 3.1.1 in \cite{20} identifies another subgroup $K$ of Aut$(Q_n)$ which consists of permutations of digits in all $n$-strings, so that $|K|=n!$; this corresponds to the $n!$ permutations of the vertices $100\cdots00, 010\cdots00, \dots, 000\cdots01$ adjacent to $00\cdots 0$.   Since $H \cap K$ is just the identity element, it follows that Aut$(Q_n) = HK$, so that the choice of a vertex as $00 \cdots 0$ along with a choice of a labeling of its $n$ neighboring vertices determines the labeling of the entire graph.
\end{proof}

The {\em cube-connected cycles graph} $CCC_n$ is the graph where each vertex of the hypercube graph $Q_n$ is replaced by an $n$-cycle.  We can label vertices in $CCC_n$ by adding an extra digit to the binary strings of $Q_n$ in order to keep track of the $n$ vertices within each cycle.  Specifically, in $CCC_n$ vertices are $(n+1)$-strings of the form $x_1x_2 \dots x_nk$, where each $x_i \in \{0,1\}$ , $k \in \{1,\dots,n\}$, and two vertices are equal if and only if each digit in their strings are equal.  The $x_i$ are referred to as {\em cube digits}, the $k$ as {\em the cycle digit}.   Edges are of two types:  the first kind, termed a {\em cycle edge}, connects two vertices whose cube digits are equal but whose cycle digit differs by 1 (where we consider the cycle digits of 1 and $n$ as differing by 1, cyclically).  Thus cycle edges connect consecutive vertices in the same cycle.  The second kind of edge, termed a {\em cube edge}, connects two vertices whose cycle digits are equal to a common value $k$, and whose cube digits only differ in the $k$-th digit $x_k$.  Thus cube edges connect adjacent cycles in such a way that $CCC_n$ is 3-regular.

Important from the outset is the observation that the labeling of $CCC_n$ is also not unique.  In particular, we have the following lemma.

\begin{lemma}
\label{lem:label}
A choice of a vertex $u$ in $CCC_n$ as $000\cdots01$, along with a choice of orientation for the cycle containing $u$, determines the labeling of all vertices in $CCC_n$.
\end{lemma}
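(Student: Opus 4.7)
My plan is to propagate the labeling outward from the cycle containing $u$, first to adjacent cycles via cube edges, then to all of $CCC_n$ by appealing to Lemma \ref{lem:labelcube}.

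First, using the specified orientation, I would label the vertices in the cycle containing $u$ as $000\cdots 0\,k$ for $k = 1, \ldots, n$, in cyclic order matching the chosen orientation. For each $k$, the vertex $000\cdots 0\,k$ has a unique neighbor outside its cycle, and the cube-edge adjacency rule forces this neighbor to be labeled $\vec{e_k}\,k$, where $\vec{e_k}$ is the cube string with a $1$ in position $k$ and $0$'s elsewhere. This identifies the cube strings of the $n$ cycles adjacent to the cycle of $u$ in the quotient graph $Q_n$, where $Q_n$ is obtained by contracting each cycle of $CCC_n$ to a point (vertices of $Q_n$ correspond to $n$-cycles of $CCC_n$, and edges of $Q_n$ correspond to cube edges).

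Next, I would invoke Lemma \ref{lem:labelcube} applied to this $Q_n$: knowing the label $000\cdots 0$ for the cycle of $u$ together with the labels $\vec{e_1}, \ldots, \vec{e_n}$ of its $n$ neighbors forces the cube-digit string of every cycle in $CCC_n$. With cube digits known for every cycle, I would then determine cycle digits vertex by vertex. For each cycle $\vec{y}$, each of its $n$ cube edges goes to a uniquely-determined adjacent cycle whose cube string differs from $\vec{y}$'s in a single position $k$; the cube-edge rule then forces the endpoint of this edge within $\vec{y}$ to be labeled $\vec{y}\,k$. Every vertex thereby receives a cycle-digit label.

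The step I expect to be the main obstacle is verifying that the resulting labeling is internally consistent, i.e., that within each cycle $\vec{y}$ the vertices receiving labels $\vec{y}\,k$ and $\vec{y}\,(k+1)$ (indices mod $n$) are indeed cycle-adjacent. This relies on the structural feature of $CCC_n$ that walking around any cycle, the cube edges emanating from consecutive vertices connect to cycles whose cube strings differ from the current cycle's in consecutive positions cyclically. Since this feature is built into the construction of $CCC_n$, the assignment is consistent, and for $n \geq 3$ the cyclic sequence $(1,2,\ldots,n)$ differs from its reverse as a cyclic sequence, so the direction of labeling in each cycle is also pinned down. Since every step above was forced by the initial choice of $u$ and the orientation of its cycle, the complete labeling of $CCC_n$ is uniquely determined.
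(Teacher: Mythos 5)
Your proposal is correct and follows essentially the same route as the paper: label the base cycle using the chosen orientation, use the cube-edge rule to pin down the cube strings of the $n$ adjacent cycles, invoke Lemma \ref{lem:labelcube} to propagate cube strings to all cycles, and then recover each vertex's cycle digit from its incident cube edge. The only difference is that you explicitly flag the internal-consistency check (that the labels $\vec{y}\,k$ and $\vec{y}\,(k+1)$ land on cycle-adjacent vertices), which the paper leaves implicit; this is a reasonable extra care but not a departure in method.
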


\begin{proof}
Since $CCC_n$ is a Cayley graph, it is vertex-transitive, so we may label any vertex $u$ as $000\cdots01$.  Once we then choose an orientation for the cycle containing $u$, that specifies the labeling of all other vertices in that cycle as $000\cdots02$ through $000\cdots0n$.  Then by the cube edge relations, this determines the cube digits of all vertices in each neighboring cycle; specifically, the vertices in the cycle connected by a cube edge to $000\cdots0k$ will have a 1 in the $k$-th cube digit and $0$'s in the other cube digits.  As a result, by Lemma \ref{lem:labelcube}, this in fact determines the cube digits for all cycles, hence cube digits for all vertices in $CCC_n$.  Finally, given any vertex in $CCC_n$, it is then connected by a cube edge to another vertex, and they differ in some $k$-th cube digit, thus determining their cycle digit to be $k$; hence all labels for all vertices in $CCC_n$ are then fixed.
\end{proof}

We then have two propositions which follow from Lemma \ref{lem:label}.

\begin{proposition}
\label{prop:aut}
$|Aut(CCC_n)| = n2^{n+1}$.
\end{proposition}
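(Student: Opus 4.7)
The plan is to obtain $|Aut(CCC_n)| = n \cdot 2^{n+1}$ as the product of an upper bound coming directly from Lemma \ref{lem:label} and a matching lower bound produced by exhibiting explicit automorphisms. Lemma \ref{lem:label} says that any valid labeling of $CCC_n$ is completely determined by the choice of which vertex carries the label $000\cdots01$ together with an orientation of the cycle containing it. Since each automorphism $\varphi$ produces a new valid labeling by transport (reading the old label of $\varphi^{-1}(v)$ off at $v$), $\varphi$ is determined by its action on $000\cdots01$ and on the cycle containing $\varphi(000\cdots01)$. With $n \cdot 2^n$ vertices and $2$ orientations per cycle, this caps the automorphism count at $n \cdot 2^{n+1}$.

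For the matching lower bound, I would realize every (vertex, orientation) pair as an automorphism. Since $CCC_n$ is Cayley and therefore vertex-transitive, the Cayley translations already supply $n \cdot 2^n$ automorphisms, one for each possible image of $000\cdots01$. To double this count, I would construct a single automorphism $\Phi$ that fixes $u_0 = 000\cdots01$ but reverses the orientation of its cycle; composing $\Phi$ with the translations then delivers the remaining $n \cdot 2^n$ automorphisms. A natural candidate uses the involution $\sigma$ on $\{1,\dots,n\}$ given by $\sigma(k) = 2-k \pmod n$ (so $\sigma(1)=1$ and $\sigma$ reverses cyclic order), extended to the whole graph by $\Phi(x_1 x_2 \cdots x_n k) = x_{\sigma(1)} x_{\sigma(2)} \cdots x_{\sigma(n)} \sigma(k)$, which permutes cube digits by the same reflection that it applies to the cycle digit.

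The main obstacle is verifying that $\Phi$ is indeed an automorphism. Cycle edges pose no difficulty since $\sigma(k+1) = \sigma(k) - 1 \pmod n$, so consecutive cycle positions go to consecutive cycle positions. For a cube edge between $x_1 \cdots x_n k$ and $x_1 \cdots \bar{x}_k \cdots x_n k$, the key point is that $\sigma$ is an involution: the $\Phi$-images share the cycle digit $\sigma(k)$ and agree in every cube digit except the $\sigma(k)$-th, which is precisely the defining property of a cube edge. Once $\Phi$ is checked, the upper and lower bounds meet at $n \cdot 2^{n+1}$, completing the proof.
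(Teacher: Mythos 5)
Your proof is correct, and its upper-bound half coincides with the paper's argument: Lemma \ref{lem:label} shows an automorphism is pinned down by the image of $000\cdots01$ together with the induced orientation on the image cycle, capping the count at $n2^{n+1}$. Where you genuinely diverge is the lower bound. The paper simply asserts a one-to-one correspondence between automorphisms and labelings and counts labelings, implicitly taking for granted that every (vertex, orientation) choice is actually realized by an automorphism; you instead realize all $n2^{n+1}$ choices explicitly, via the $n2^n$ Cayley translations together with the reflection $\Phi$ induced by $\sigma(k)=2-k \pmod{n}$, which fixes $000\cdots01$ and reverses its cycle. Your check that $\Phi$ respects cube edges is the right one and works: since $\sigma$ is an involution, two strings differing only in the $k$-th cube digit have images differing only in the $\sigma(k)$-th cube digit while sharing cycle digit $\sigma(k)$, which is precisely a cube edge; cycle edges are preserved because $\sigma(k+1)=\sigma(k)-1$ cyclically. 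The paper's version is shorter because it leans entirely on Lemma \ref{lem:label}; yours is more self-contained, fills in the realization step the paper glosses over, and in effect constructs the orientation-reversing symmetry that the paper only invokes implicitly later (in Proposition \ref{prop:swap} for cycle edges). The only caveat is degenerate small $n$ (for $n\le 2$ your $\sigma$ is the identity), but $CCC_n$ is only of interest for $n\ge 3$.
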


\begin{proof}
The automorphisms of $CCC_n$ are in one-to-one correspondence with distinct vertex labelings of $CCC_n$.  Since there are $n2^n$ vertices in $CCC_n$, there are $n2^n$ choices for the vertex $000\cdots01$, and then there are 2 choices of orientation for the cycle containing that vertex.  Hence by Lemma \ref{lem:label} there are $n2^{n+1}$ different vertex labelings of $CCC_n$ and the proposition follows.
\end{proof}

\begin{proposition}
\label{prop:swap}
Given an edge $uv$ in $CCC_n$, there is an automorphism $\varphi \in Aut(CCC_n)$ such that $\varphi(u)=v$ and $\varphi(v)=u$.
\end{proposition}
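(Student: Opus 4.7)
My plan is to use Lemma \ref{lem:label} to construct the desired swap $\varphi$ by exhibiting an appropriate new labeling of $CCC_n$. By the vertex-transitivity established in that lemma, I can translate so that $u = 000\cdots 01$ without loss of generality. Then $v$ is one of the three neighbors of $u$, so either $uv$ is a cycle edge (and after possibly reversing the orientation of the cycle containing $u$, I may take $v = 000\cdots 02$) or $uv$ is a cube edge (and $v = 100\cdots 01$).

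For the cycle-edge case, I would apply Lemma \ref{lem:label} with the new base vertex taken to be $v$ and the new cycle orientation chosen so that $u$ receives the label $000\cdots 02$. Under the automorphism $\varphi$ associated with this relabeling, each vertex $w$ is sent to the vertex whose original label equals $w$'s new label, so $\varphi(u) = v$ and $\varphi(v) = u$. For the cube-edge case, I would apply Lemma \ref{lem:label} with the new base vertex $v$ and either orientation; the cube-edge propagation described in the proof of Lemma \ref{lem:label} then forces $u$, as the cube-edge neighbor of $v$ at cycle digit $1$, to receive the new label $100\cdots 01$, and the same dictionary yields $\varphi(u) = v$ and $\varphi(v) = u$.

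I do not expect a serious obstacle. The chief bookkeeping is to articulate the dictionary between the labelings produced by Lemma \ref{lem:label} and genuine automorphisms of $CCC_n$, and then to verify in each case that the new labeling interchanges exactly the labels carried by $u$ and $v$ while the rest of the structure is permuted consistently. The cycle-edge case is mildly more delicate because it uses the orientation freedom in Lemma \ref{lem:label}, whereas in the cube-edge case the orientation choice is irrelevant to the swap.
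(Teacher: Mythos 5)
Your proposal is correct and follows essentially the same route as the paper: both obtain the swap by invoking Lemma \ref{lem:label} twice to produce two labelings (one based at $u$, one based at $v$ with a suitable orientation) and taking $\varphi$ to be the automorphism matching equal labels. The only cosmetic difference is that you normalize the cycle-edge case to $v=000\cdots02$ whereas the paper uses $v=000\cdots0n$; both work.
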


\begin{proof}
If $uv$ is a cube edge, then by Lemma \ref{lem:label} there is a labeling of $CCC_n$ (actually two) which labels $u=000\cdots01$ and $v=100\cdots01$; but there is also a labeling of $CCC_n$ which reverses this, labeling $u=100\cdots01$ and $v=000\cdots01$.  Thus there is an automorphism $\varphi$ which matches up vertices with the same digits under these two labelings, so that $\varphi(u)=v$ and $\varphi(v)=u$.  

On the other hand, if $uv$ is a cycle edge, then by Lemma \ref{lem:label} there is a labeling of $CCC_n$ which labels $u=000\cdots01$ and $v=000\cdots0n$; but there is also a labeling of $CCC_n$ which reverses this, labeling $u=000\cdots0n$ and $v=000\cdots01$.  Thus again there is an automorphism $\varphi$ which matches up vertices with the same digits under these two labelings, so that $\varphi(u)=v$ and $\varphi(v)=u$.  
\end{proof}

We now have our first major result.

\begin{proposition}
\label{prop:db}
$CCC_n$ is distance-balanced for all $n$.
\end{proposition}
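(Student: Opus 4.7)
The plan is to obtain the equality $|W_{u,v}| = |W_{v,u}|$ for every edge $uv$ directly from Proposition \ref{prop:swap}, using the fact that any graph automorphism preserves the distance function. The heavy lifting has already been done in establishing the existence of an edge-swapping automorphism for each type of edge (cube or cycle), so here I only need to package that fact into a bijection between the two sets.

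More concretely, fix an edge $uv \in E(CCC_n)$ and let $\varphi \in \mathrm{Aut}(CCC_n)$ be an automorphism with $\varphi(u) = v$ and $\varphi(v) = u$, as guaranteed by Proposition \ref{prop:swap}. For any vertex $x$, the distance-preserving property of $\varphi$ gives
\[
d(\varphi(x), v) = d(\varphi(x), \varphi(u)) = d(x, u) \quad \text{and} \quad d(\varphi(x), u) = d(\varphi(x), \varphi(v)) = d(x, v).
\]
Hence $x \in W_{u,v}$ if and only if $\varphi(x) \in W_{v,u}$, so the restriction of $\varphi$ to $W_{u,v}$ is an injection into $W_{v,u}$. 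Since $\varphi$ is a bijection on $V(CCC_n)$, and applying $\varphi^{-1}$ (which also swaps $u$ and $v$) gives the reverse inclusion, we obtain a bijection $W_{u,v} \to W_{v,u}$, and therefore $|W_{u,v}| = |W_{v,u}|$.

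I do not expect any real obstacle here: the main technical work was already absorbed into Lemma \ref{lem:label} and Proposition \ref{prop:swap}, which produced a symmetry interchanging the endpoints of every edge. In effect, the proposition is the standard observation that edge-transitivity (or even just the edge-swap property) implies distance-balanced, and the proof is essentially one line once Proposition \ref{prop:swap} is in hand.
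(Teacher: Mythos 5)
Your argument is correct and is essentially the same as the paper's: the paper also deduces the proposition from Proposition \ref{prop:swap}, simply citing Proposition 2.4 of Jerebic--Klav\v{z}ar--Rall for the fact that an edge-swapping automorphism forces $|W_{u,v}| = |W_{v,u}|$, which is exactly the bijection argument you wrote out explicitly.
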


\begin{proof}
By Proposition 2.4 in \cite{2}, if $G$ is a graph such that for any edge $uv$ in $G$, there is an automorphism $\varphi \in Aut(G)$ that results in $\varphi(u)=v$ and $\varphi(v)=u$, then $G$ is distance-balanced.  By Proposition \ref{prop:swap} above, $CCC_n$ is such a graph, and hence $CCC_n$ is distance-balanced.
\end{proof}

To investigate the nice distance-balancing of $CCC_n$, we need a new method of understanding minimum paths and distance, which we now introduce in the next section.

\section{Interval-with-ends diagrams for paths in $CCC_n$}
\label{sec:one}

A path in $CCC_n$ between two vertices is a sequence of cube and cycle edges.  Whenever an edge is traversed, exactly one digit changes in the corresponding vertex string; if it is a cycle edge, the cycle digit will change by 1, either increasing or decreasing (in the cyclic ordering of $1,\dots,n$); and if it is a cube edge, a cube digit will change from a 1 to 0, or vice versa.  More specifically, if we are at a vertex $x_1x_2 \cdots x_nk$, we only have three directions we can go, namely we can either change the cycle digit $k$ to $k-1$ or $k+1$ (in the cyclic ordering of $1,\dots,n$), and traverse a cycle edge; or we can change the $k$-th cube digit $x_k$ and traverse that cube edge.  With this in mind, our first goal in this section is to establish a new way to represent paths in $CCC_n$.   

By vertex-transitivity, we may focus on paths from $u = 00\cdots01$ to another vertex $x = x_1x_2 \dots x_nk$.  We begin with an informal discussion of a path from $u$ to $x$ before formalizing our observations.  In order to move from $u = 00\cdots01$ to $x = x_1x_2 \cdots x_nk$ via a path $\gamma$, we need to examine which cube digits in $x$ are 1's, for these will need to be changed.  For example, if $x_i=1$, then we know that $\gamma$ will need to traverse cycle edges within some cycle until the cycle digit changes to $i$; then the changing of the $i$-th cube digit from 0 to 1 will result in the traversal of a cube edge, with the result that $x_i =1$.  We then have to do this for all such cube digits in $x$ which are $1$, and then finally move within the last cycle until the cycle digit changes to $k$.

As a result, we can represent paths $\gamma$ in $CCC_n$ symbolically using what we call an {\em interval-with-ends} (IWE) diagram, with four examples shown in Figure \ref{fig:intervalwithends} for IWE diagrams  from $00000001$ to $01011006$ in $CCC_7$.  To construct an IWE diagram from $u =00\cdots01$ to $x = x_1x_2 \cdots x_nk$, we do the following:

\begin{enumerate}
\item Form a loop connecting the numbers $1$ through $n$; these represent the cycle digits for vertices in the path.
\item Circle all of the numbers $i$ where the $i$-th cube digit needs to be changed from $0$ to $1$ in moving from $u$ to $x$.
\item Choose an interval along the loop that includes all the circled digits, has endpoints at circled digits, and has angular support less than $2\pi$; if there are $m$ circled digits, we have $m$ possible intervals.
\item Since we begin at $u$ with cycle digit 1, draw an arrow from $1$ to one of the endpoints of the interval, along the loop, with angular support less than $2\pi$.  We call this arrow the {\em initial end}; there are four possibilities for it, depending on which endpoint of the interval it connects to, and if it proceeds clockwise or counterclockwise.
\item Since we end at $x$ with cycle digit $k$, draw an arrow from the other endpoint of the interval to $k$, along the loop, with angular support less than $2\pi$.  We call this arrow the {\em terminal end}; once the initial end is fixed, there are two possibilities for the terminal end, depending if it proceeds clockwise or counterclockwise.
\end{enumerate}

We note that the interval may be an empty interval in the case where both $u$ and $x$ are in the same cycle in $CCC_n$, in which case we then have a single end connecting 1 to $k$ as our IWE diagram.

\begin{figure}[h]
\labellist
\small\hair 2pt
\pinlabel $7$ at 124 448
\pinlabel $6$ at 67 269
\pinlabel $5$ at 173 106
\pinlabel $4$ at 372 94
\pinlabel $3$ at 513 254
\pinlabel $2$ at 460 420
\pinlabel $1$ at 300 515
\pinlabel interval at 30 500
\pinlabel {initial end} at 440 540
\pinlabel {terminal end} at 20 110
\pinlabel {(c)} at 300 -10

\pinlabel $7$ at 120 1094
\pinlabel $6$ at 67 914
\pinlabel $5$ at 170 746
\pinlabel $4$ at 370 736
\pinlabel $3$ at 510 897
\pinlabel $2$ at 460 1067
\pinlabel $1$ at 296 1160
\pinlabel interval at 570 820
\pinlabel {initial end} at 440 1190
\pinlabel {terminal end} at 10 760
\pinlabel {(a)} at 300 655

\pinlabel $7$ at 978 1090
\pinlabel $6$ at 920 914
\pinlabel $5$ at 1027 747
\pinlabel $4$ at 1225 734
\pinlabel $3$ at 1367 895
\pinlabel $2$ at 1314 1067
\pinlabel $1$ at 1150 1160
\pinlabel interval at 1430 820
\pinlabel {terminal end} at 1250 970
\pinlabel {initial end} at 880 1100
\pinlabel {(b)} at 1160 655

\pinlabel $7$ at 980 447
\pinlabel $6$ at 921 270
\pinlabel $5$ at 1027 100
\pinlabel $4$ at 1227 90
\pinlabel $3$ at 1364 250
\pinlabel $2$ at 1316 420
\pinlabel $1$ at 1152 515
\pinlabel interval at 890 500
\pinlabel {initial end} at 890 100
\pinlabel {terminal end} at 1250 320
\pinlabel {(d)} at 1160 -10

\endlabellist
\centering
\includegraphics[scale=0.30]{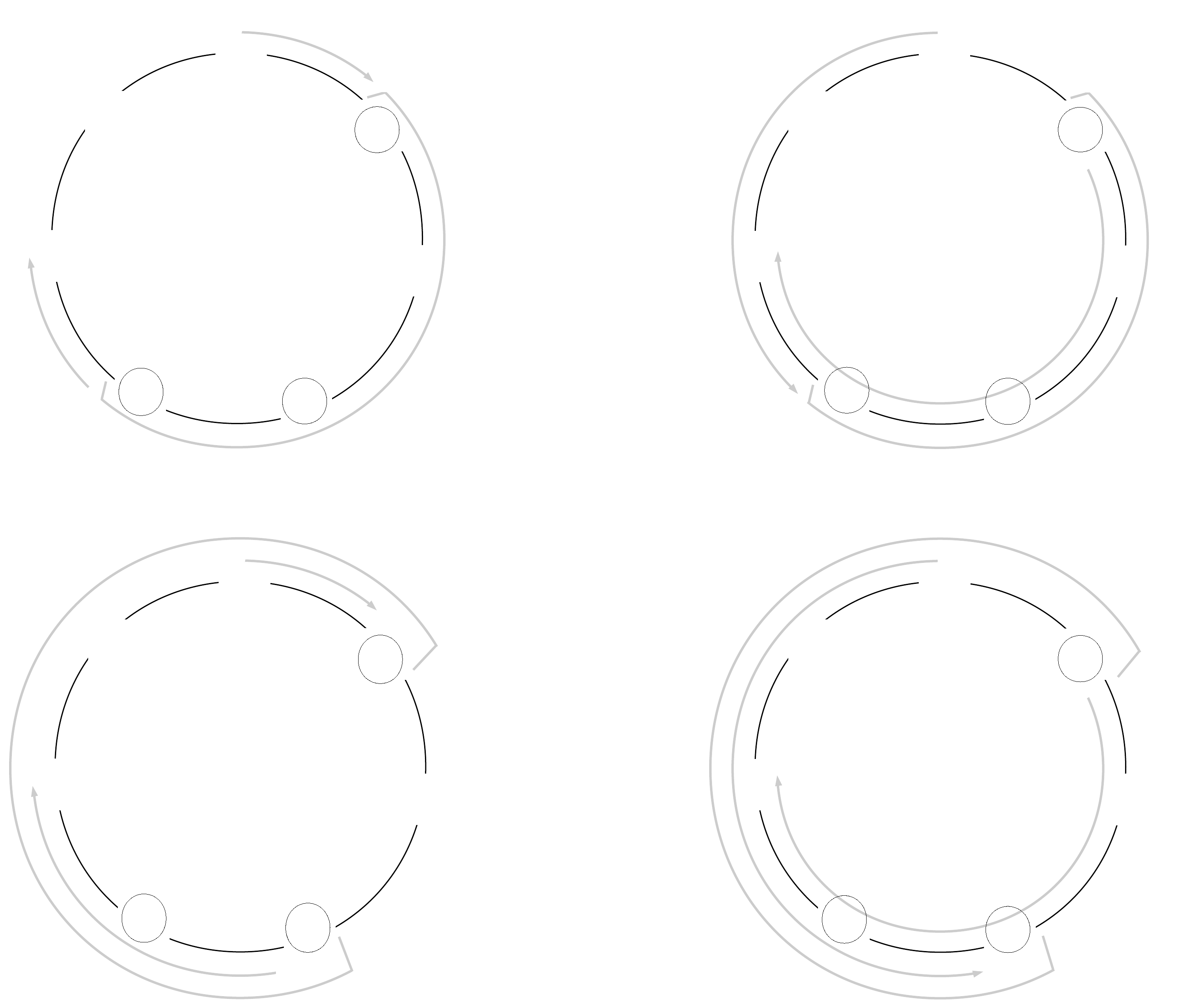}
\caption{Four examples of interval-with-ends (IWE) diagrams from $00000001$ to $01011006$ in $CCC_7$.  Part (a) has length 8, since three circles are included in the interval, and five edges in the loop are included in the interval-with-ends; part (b) has length 13; part (c) has length 11; and part (d) has length 16.}
\label{fig:intervalwithends}
\end{figure}

There are multiple possible IWE diagrams from $u$ to $x$; yet for a given IWE diagram, there is a shortest path $\gamma$ in $CCC_n$ from $u$ to $x$ supported by that IWE diagram, namely the following:  First, we proceed monotonically along the initial end from $1$ to the first endpoint of the interval, and each edge we traverse in the initial end corresponds to a cycle edge in $\gamma$.  Then, at the endpoint of the interval, we have a circled digit, and that circle corresponds to traversing a cube edge in $\gamma$.  We then proceed monotonically along the interval, and every circle we encounter represents a cube edge in $\gamma$, with every edge in the interval a cycle edge in $\gamma$, until we reach the end of the interval with our final circle and cube edge.  Then, we proceed monotonically along the terminal end to $k$, and every edge in the terminal end corresponds to a cycle edge in $\gamma$.  Thus given an IWE diagram $D$, we will denote its length by $|D|$, with

\begin{eqnarray*}
|D| &=& \textrm{(\# of edges in initial end)} + \textrm{(\# of edges in terminal end)} \\ &\ &+ \textrm{\ (\# of edges in interval)} + \textrm{(\# of circled digits)}
\end{eqnarray*}

As indicated in Figure \ref{fig:intervalwithends}, the length of $\gamma$ for the IWE diagram in part (a) is 8, for part (b) is 13, for part (c) is 11, and for part (d) is 16.  Note that if we do not move monotonically along the ends of the interval, but double back, or change cube digits other than the circled ones, this can give us other paths in $CCC_n$.   But these will necessarily be longer than $|D|$.  Furthermore, any path in $CCC_n$ between $u =00\cdots01$ and $x = x_1x_2 \cdots x_nk$ can be mapped out in this cycle digit loop, where we record the edges traversed in the cycle digit loop, and circle the cube digits changed; although this mapping may not be an IWE itself, nevertheless an IWE diagram $D$ will be a subset of it, and hence the length of that path will be at least $|D|$.  Therefore, if $u =  00\cdots01$ and $x = x_1x_2 \cdots x_nk$, we have that in $CCC_n$, $$d(u,x) = \min\{|D| : D \textrm{\ is an IWE diagram from\ } u \textrm{\ to\ } x\}.$$

\noindent Thus $d(u,x) = |D|$ for at least one IWE diagram $D$, which gives a shortest path $\gamma$ in $CCC_n$ from  $u$ to $x$.

\section{Nice distance-balancing in $CCC_n$}
\label{sec:two}

In order to determine under what conditions $CCC_n$ may be nicely distance-balanced, we first consider cube edges.  We recall that in $CCC_n$ there are a total of $n2^n$ vertices.

\begin{proposition}
\label{prop:cubedge}
If $uv$ is a cube edge in $CCC_n$, then $|W_{u,v}| = |W_{v,u}| = n2^{n-1}$.
\end{proposition}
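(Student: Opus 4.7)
The plan is to use the IWE framework of Section \ref{sec:one} to show that for any cube edge $uv$ we have $d(u,x) \neq d(v,x)$ for every vertex $x$ in $CCC_n$; the vertex set then partitions as $W_{u,v} \sqcup W_{v,u}$, and a direct count of each piece will give the claim.

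By vertex-transitivity (Lemma \ref{lem:label}) I would label $u = 000\cdots01$ and $v = 100\cdots01$, so that $u$ and $v$ differ only in the first cube digit. For a vertex $x = x_1x_2\cdots x_n k$, projecting any path in $CCC_n$ onto the cycle-digit loop $C_n$ sends each cycle edge to a step along the loop and each cube edge to a stationary move at which a cube digit is toggled. In a shortest path each cube digit is toggled at most once, and only those that actually need to change are toggled, so for $u \to x$ the number of cube edges is exactly $|I(x)|$, where $I(x) = \{i : x_i = 1\}$, while the projected walk on $C_n$ goes from position $1$ to position $k$ and visits every position in $I(x)$. Writing $L(k,S)$ for the minimum length of such a walk visiting $S$, the IWE description yields
\[
d(u,x) = |I(x)| + L(k, I(x)), \qquad d(v,x) = |I(x) \triangle \{1\}| + L(k, I(x) \triangle \{1\}).
\]

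The key observation is that any walk starting at $1$ visits $1$ trivially, so $L(k,S) = L(k, S\triangle\{1\})$. Hence
\[
d(v,x) - d(u,x) = |I(x) \triangle \{1\}| - |I(x)| = \pm 1,
\]
with sign $-1$ when $x_1 = 1$ and $+1$ when $x_1 = 0$. In particular $d(u,x) \neq d(v,x)$, so $W^v_u = \emptyset$, and $W_{u,v} = \{x \in V : x_1 = 0\}$ while $W_{v,u} = \{x \in V : x_1 = 1\}$. Each of these has $n \cdot 2^{n-1}$ elements ($n$ choices of cycle digit $k$ times $2^{n-1}$ choices for the remaining cube digits), giving the claim.

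The main step requiring care is the distance formula $d(u,x) = |I(x)| + L(k, I(x))$; this I would extract directly from the IWE description of shortest paths in Section \ref{sec:one}, since an IWE is precisely a choice of walk from $1$ to $k$ on $C_n$ covering $I(x)$ together with a toggle at each circled position, and the discussion there already identifies $d(u,x)$ with the minimum IWE length.
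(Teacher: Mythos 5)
Your proposal is correct and rests on the same key observation as the paper's proof: since $u$ and $v$ share cycle digit $1$ and differ only in whether the first cube digit must be toggled, the cycle-edge cost of reaching $x$ is unchanged while the cube-edge count shifts by exactly one, so the vertex set splits by the value of $x_1$. The paper phrases this as deleting the circle around $1$ from an optimal IWE diagram, whereas you phrase it as $L(k,S)=L(k,S\triangle\{1\})$ for walks based at $1$; these are the same argument, with yours giving the slightly sharper conclusion $d(v,x)-d(u,x)=\pm 1$.
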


\begin{proof}
We may assume that $u = 00 \cdots 01$ and $v = 10 \cdots 01$.  We claim that any vertex with a 0 as its first cube digit is in $W_{u,v}$, and any vertex with a 1 as its first cube digit is in $W_{v,u}$.  To see that $x=1x_2 \cdots x_nk \in W_{v,u}$, consider an IWE diagram $D$ from $u$ to $x$ such that $|D| = d(u,x)$.  Then observe that since $u$ and $v$ differ only in the first cube digit, an IWE diagram $D'$ from $v$ to $x$ can be obtained by using $D$, but deleting the circle around 1.  Thus $d(x,v) \leq d(u,x)-1 < d(u,x)$, so that $x \in W_{v,u}$.  An entirely similar argument shows that if $x=0x_2 \cdots x_nk$, then $x \in W_{u,v}$, and this proves the proposition.
\end{proof}

\begin{remark}
\label{rem}
Since $|W_{u,v}| = |W_{v,u}| = n2^{n-1}$ we also have that $W^v_u = \emptyset$.
\end{remark}

We now turn to cycle edges for the following proposition.

\begin{proposition}
\label{prop:cycleedge}
If $uv$ is a cycle edge in $CCC_n$, then $W^v_u = \emptyset$ if and only if $n$ is even.
\end{proposition}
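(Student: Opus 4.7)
The plan is to leverage vertex-transitivity to fix a single representative edge, then split into the two implications. By the vertex-transitivity established in Lemma \ref{lem:label}, together with Proposition \ref{prop:swap}, all cycle edges are equivalent under $\mathrm{Aut}(CCC_n)$, so the property ``$W^v_u = \emptyset$'' for a cycle edge $uv$ does not depend on the choice of edge. Thus I may assume throughout that $u = 00\cdots 01$ and $v = 00\cdots 0n$.

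For the ``if'' direction, when $n$ is even, I would prove that $CCC_n$ is bipartite by exhibiting an explicit 2-coloring. Define $p(x_1 x_2 \cdots x_n k) = (x_1 + x_2 + \cdots + x_n + k) \bmod 2$. A cube edge flips exactly one cube digit, so it flips $p$. A cycle edge either changes $k$ to $k \pm 1$ (flipping $p$) or wraps around between cycle digits $1$ and $n$, changing $k$ by $\pm (n-1)$; since $n$ is even, $n-1$ is odd, and $p$ again flips. So $p$ is a proper 2-coloring and $CCC_n$ is bipartite. In a bipartite graph, any adjacent vertices $u,v$ lie in opposite color classes, so for every vertex $x$ the distances $d(x,u)$ and $d(x,v)$ have opposite parities and hence cannot be equal. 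Therefore $W^v_u = \emptyset$.

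For the ``only if'' direction I would prove the contrapositive by producing an explicit witness when $n$ is odd. Take
\[
x = 00\cdots 0\,\tfrac{n+1}{2},
\]
the vertex in the same cycle as $u$ and $v$ whose cycle digit is the ``antipode'' of the edge $uv$. Using the interval-with-ends description from Section \ref{sec:one}, the shortest IWE diagram from $u$ to $x$ has empty interval (since no cube digit of $x$ needs to change) and consists of a single arc from cycle digit $1$ to $\tfrac{n+1}{2}$; the shorter of the two arcs on the cycle loop has length $\tfrac{n-1}{2}$. The same reasoning applied to $v$ gives an IWE diagram consisting of the arc from $n$ to $\tfrac{n+1}{2}$, also of length $\tfrac{n-1}{2}$. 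Hence $d(x,u) = d(x,v) = \tfrac{n-1}{2}$, so $x \in W^v_u$ and $W^v_u \neq \emptyset$.

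I do not anticipate any serious obstacle: the interval-with-ends machinery makes within-cycle distances completely transparent, so the witness in the odd case is immediate, and the bipartite parity argument is clean once the correct coloring is identified. The only mild subtlety is checking that the cycle-edge ``wrap-around'' between digits $1$ and $n$ behaves correctly with respect to the parity $p$, which is precisely what forces the dichotomy in $n$.
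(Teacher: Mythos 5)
Your proof is correct, but the harder direction is done by a genuinely different and substantially shorter route than the paper's. The ``only if'' half (the witness $x=00\cdots0\left(\frac{n+1}{2}\right)$ when $n$ is odd) is exactly the paper's argument. For the ``if'' half, however, the paper works entirely inside the interval-with-ends framework: it takes an arbitrary $x\in W^v_u$, compares optimal IWE diagrams for $d(x,u)$ and $d(x,v)$ in two cases (same interval versus different intervals), and extracts from the resulting linear equations a parity constraint forcing $n$ to be odd. You instead observe that the $2$-coloring $p(x_1\cdots x_nk)=(x_1+\cdots+x_n+k)\bmod 2$ is proper precisely when $n$ is even (the only edges that could fail are the wrap-around cycle edges between cycle digits $1$ and $n$, which change $k$ by $n-1$), so $CCC_n$ is bipartite for even $n$ and hence $d(x,u)\not\equiv d(x,v)\pmod 2$ for every vertex $x$ and every edge $uv$ -- cycle or cube. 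This is clean, self-contained, and arguably more conceptual: it explains the even/odd dichotomy as bipartiteness, and it also recovers the paper's Remark that $W^v_u=\emptyset$ for cube edges (for even $n$) as a byproduct. What the paper's longer computation buys is that it stays within the IWE machinery developed in Section 3 and yields slightly finer information (it pins down exactly which algebraic relations among the cycle digits a tied vertex must satisfy), but as a proof of the stated proposition your bipartite argument is complete and preferable in economy. The only housekeeping points worth making explicit are the standard fact you are implicitly using -- in a connected bipartite graph all walks between two fixed vertices have the same length parity, so adjacent $u,v$ satisfy $d(x,u)\neq d(x,v)$ for all $x$ -- and the (easy, IWE-justified) observation that a shortest path between two vertices of the same cycle stays in that cycle, which you do address.
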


\begin{proof}
We may assume throughout the proof that $u = 00\cdots01$ and $v=00\cdots0n$. 

When $n$ is odd, consider the vertex $x=00\cdots0\left(\frac{n+1}{2}\right)$.  It is in the same cycle as both $u$ and $v$, and so the IWE diagram $D_u$ that realizes $d(x,u)$ is a single end that connects $1$ to $\left(\frac{n+1}{2}\right)$, with length $\frac{n-1}{2}$.  Similarly, the IWE diagram $D_v$ that realizes $d(x,v)$ is a single end that connects $n$ to $\left(\frac{n+1}{2}\right)$, with length $\frac{n-1}{2}$.  Thus $d(x,u)=d(x,v)$ in $CCC_n$, with $x \in W_u^v$.  We therefore conclude that $W^v_u \neq \emptyset$ when $n$ is odd.

In general, we now consider an arbitrary $x \in W^v_u$ in $CCC_n$, and we will show that $n$ must be odd; this will conclude the proof.  To set notation, we assume that $x = x_1x_2\cdots x_nk$.  We first consider the case where $x_i=0$ for all $i \in \{1,\dots,n\}$, so that $x$ lies in the same cycle as both $u$ and $v$.  Then the IWE diagrams realizing $d(x,u)$ and $d(x,v)$ are just single ends connecting $u$ to $x$ and $v$ to $x$, respectively.  Since $x \in W^v_u$ with $d(x,u)=d(x,v)$, we must have $d(x,u) = k-1 = d(x,v) = n-k$, so that $n+1 = 2k$; thus $n$ must be odd.

We now consider the case where $x$ does not lie in the same cycle as $u$ and $v$, and we observe that the cube digits that need to be changed to 1 in moving from $u$ to $x$ are the same cube digits that need to be changed in moving from $v$ to $x$.  We fix an IWE diagram $D$ realizing $d(x,u)$, and observe that its interval $I$ will have endpoints at numbers $i$ and $j$ which represent cube digits that need to be changed.  An example is shown in Figure \ref{fig:prealgebraic}, where the specific cyclic ordering of $i,j,k$ will not enter into the argument.  Likewise we only assume that of $1,i,j,k$ and $n$, only $1$ and $n$ are necessarily distinct.

\begin{figure}[h]
\labellist
\small\hair 2pt

\pinlabel $i$ at 468 400
\pinlabel $n$ at 200 500
\pinlabel $1$ at 350 500
\pinlabel $I$ at 20 300

\pinlabel $k$ at 150 120
\pinlabel $j$ at 370 90

\endlabellist
\centering
\includegraphics[scale=0.3]{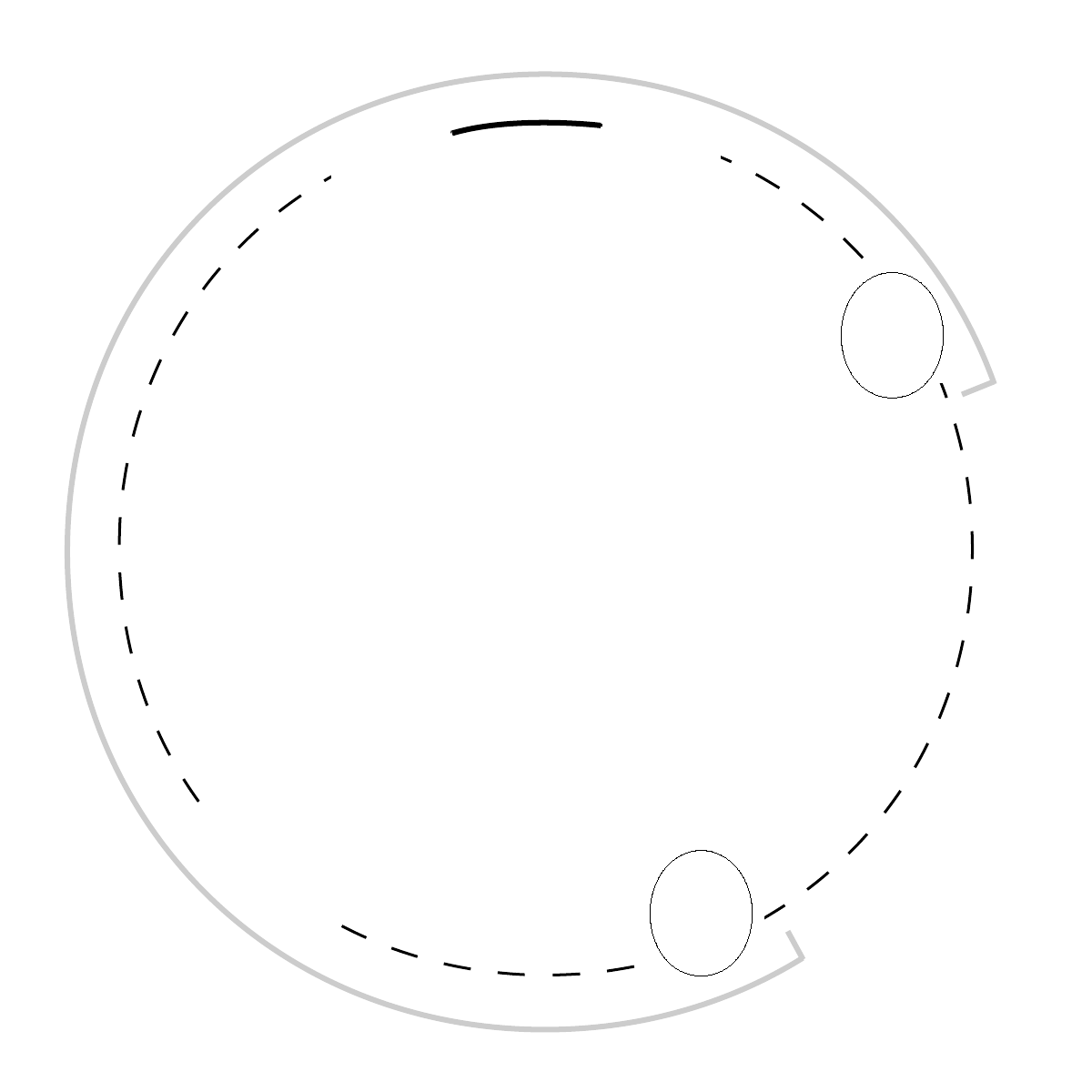}
\caption{An interval $I$ for a possible IWE diagram realizing $d(x,u)$.}
\label{fig:prealgebraic}
\end{figure}

Our first observation is that no end emanating from $1$ to either $i$ or $j$ can proceed counterclockwise, for if it did, this IWE diagram could work for $v$, but with one less edge, hence resulting in $d(x,v) < d(x,u)$; but this is not true, since we are assuming $x \in W^v_u$.  So any end emanating from $1$ for an IWE diagram realizing $d(x,u)$ must proceed clockwise.  Likewise, any end emanating from $n$ for an IWE diagram realizing $d(x,v)$ must proceed counterclockwise.  

We now consider the case where the IWE diagram $D_u$ realizing $d(x,u)$ has the same interval as the IWE diagram $D_v$ realizing $d(x,v)$.  In this case, denote the number of edges in the initial and terminal ends for $D_u$, with an endpoint at $1$ and $k$ respectively, as $\alpha_u^1$ and $\alpha_u^k$.  Likewise denote the number of edges in the initial and terminal ends for $D_v$, with an endpoint at $n$ and $k$ respectively, as $\beta_v^n$ and $\beta_v^k$.  Since $D_u$ and $D_v$ share the same interval with the same circled cube digits, and we are assuming $d(x,u)=d(x,v)$, we must have 

\begin{equation}
\label{eqn1}
\alpha_u^1+\alpha_u^k = \beta_v^n + \beta_v^k.
\end{equation}

Observe that each of these quantities has limited possibilities.  Specifically, from the reasoning in the preceding paragraph, $\alpha_u^1$ must be either $i-1$ or $j-1$, and likewise $\beta_v^n$ must be $n-i$ or $n-j$.  The possibilities for $\alpha_u^k$ and $\beta_v^k$ are more numerous, and so we take a moment to focus on $\alpha_u^k$ and observe that the terminal end with endpoint at $k$ has four options:  

\renewcommand\labelitemi{\tiny$\bullet$}

\begin{itemize}
\item if it joins $k$ to $i$ without passing through $n$, then $\alpha_u^k = |k-i|$ and $\alpha_u^1 = j-1$;
\item  if it joins $k$ to $i$ and passes through $n$, then $\alpha_u^k = n-k+i$ and $\alpha_u^1 = j-1$;
\item  if it joins $k$ to $j$ without passing through $n$, then $\alpha_u^k = |k-j|$ and $\alpha_u^1 = i-1$;
\item  if it joins $k$ to $j$ and passes through $n$, then $\alpha_u^k = n-k+j$ and $\alpha_u^1 = i-1$.
\end{itemize}

\noindent Likewise considering $\beta_v^k$, the options for its terminal end with endpoint at $k$ will be:

\begin{itemize}
\item if it joins $k$ to $i$ without passing through $n$, then $\beta_u^k = |k-i|$ and $\beta_u^n = n-j$;
\item  if it joins $k$ to $i$ and passes through $n$, then $\beta_u^k = n-k+i$ and $\beta_u^n = n-j$;
\item  if it joins $k$ to $j$ without passing through $n$, then $\beta_u^k = |k-j|$ and $\beta_u^n = n-i$;
\item  if it joins $k$ to $j$ and passes through $n$, then $\beta_u^k = n-k+j$ and $\beta_u^n = n-i$.
\end{itemize}

The key observation is that regardless of which possibility occurs, the left hand side of Equation \ref{eqn1} will contain a $-1$ term, along with a $\pm i$, $\pm j$, and $\pm k$ term, and possibly one $n$ term; likewise the right hand side of Equation \ref{eqn1} will contain $n$, along with a $\pm i$, $\pm j$, and $\pm k$ term, and possibly one more $n$ term.  The result is that when we isolate the $n$'s and 1 on the right hand side, we obtain an even left hand side with two terms each of $i, j$ and $k$.  Therefore, in order for $n$ to have an integer solution, $n$ must be odd.

We conclude by considering the case where the IWE diagram $D_u$ realizing $d(x,u)$ does not use the same interval as the IWE diagram $D_v$ realizing $d(x,v)$.  This case does not apply when there is just one circled cube digit that needs to be changed, and so from here on we may assume that $i\neq j$.  For this case denote the endpoints of the interval for $D_v$ as $i'$ and $j'$, and the interval itself as $I'$, where we likewise necessarily have $i' \neq j'$.  An example is shown in Figure \ref{fig:overlapping}, where the two intervals are colored in light gray and dark gray, respectively.

\begin{figure}[h]
\labellist
\small\hair 2pt

\pinlabel $i'$ at 100 390

\pinlabel $i$ at 470 400
\pinlabel $n$ at 200 500
\pinlabel $1$ at 350 500
\pinlabel $k$ at 490 205

\pinlabel $j'$ at 145 120
\pinlabel $j$ at 370 90

\pinlabel $I$ at 20 250
\pinlabel $I'$ at 570 250

\endlabellist
\centering
\includegraphics[scale=0.3]{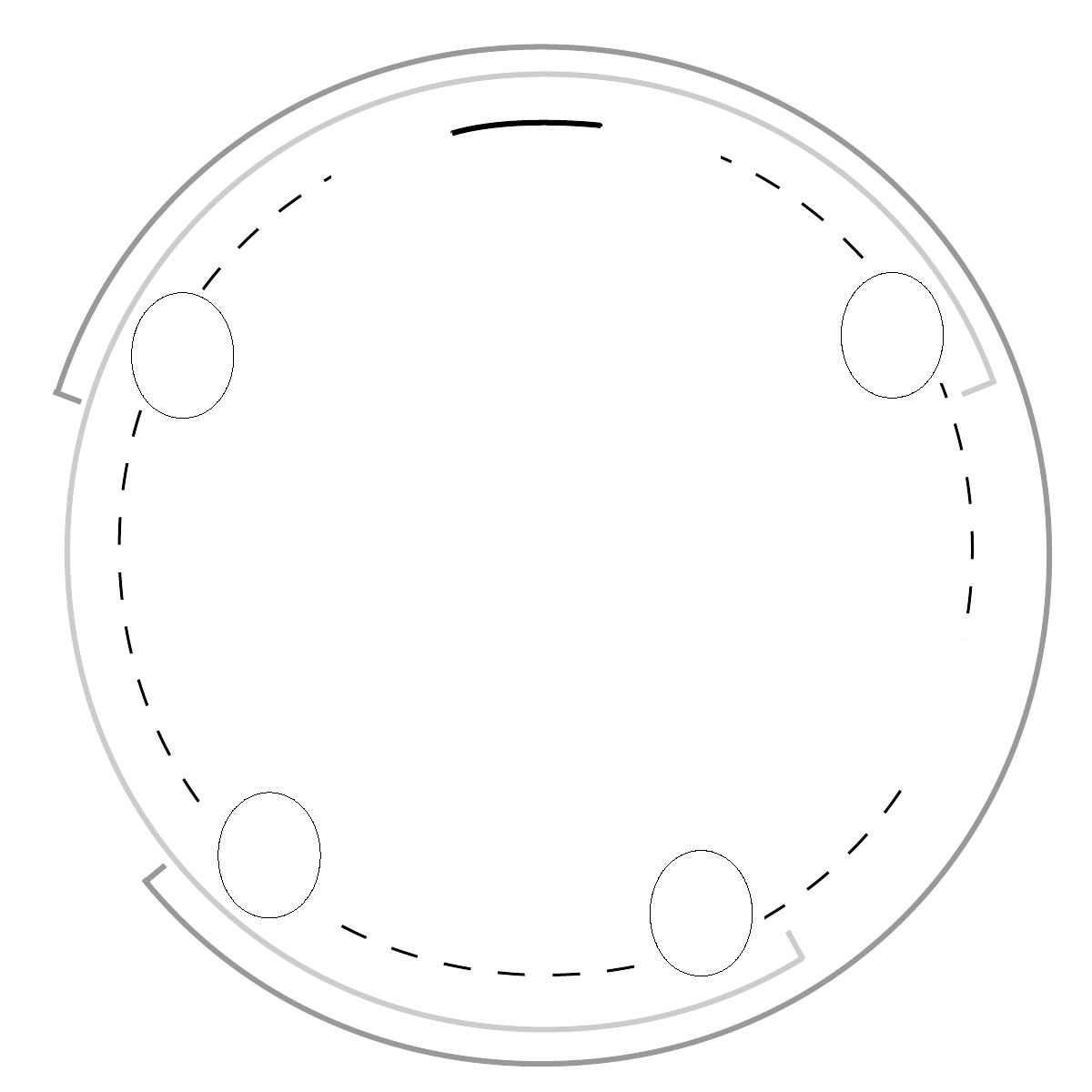}
\caption{Intervals $I$ and $I'$ for possible IWE diagrams realizing $d(x,u)$ and $d(x,v)$, respectively.}
\label{fig:overlapping}
\end{figure}

We now observe that the total number of edges in $I$ will be $n - |i-j|$, and the total number of edges in $I'$ will be $n - |i'-j'|$.  As above, denote the number of edges in the initial and terminal ends for $D_u$ as $\alpha_u^1$ and $\alpha_u^k$, respectively.  Likewise denote the number of edges in the initial and terminal ends for $D_v$ as $\beta_v^n$ and $\beta_v^k$, respectively.  Since $D_u$ and $D_v$ share the same circled cube digits, and we are assuming $d(x,u)=d(x,v)$, we must have

$$n - |i-j|+ \alpha_u^1+\alpha_u^k =n - |i'-j'|+ \beta_v^n + \beta_v^k$$

\noindent or 

\begin{equation}
\label{eqn2}
-|i-j|+ \alpha_u^1+\alpha_u^k =-|i'-j'|+ \beta_v^n + \beta_v^k.
\end{equation}

As above, the $\alpha_u^1+\alpha_u^k$ terms will contain a $-1$ term, along with a $\pm i$, $\pm j$, and $\pm k$ term, and possibly one $n$ term; similarly, the $\beta_v^n + \beta_v^k$ terms will contain $n$, along with a $\pm i'$, $\pm j'$, and $\pm k$ term, and possibly one more $n$ term.  As a result, now the left hand side of Equation \ref{eqn2} has two terms each of $i$ and $j$, and one term of $k$, and the right hand side has two terms each of $i'$ and $j'$, and one term of $k$.  Moving all these terms to the left hand side, and all $n$'s and the 1 to the right hand side, we see the left hand side is even with two terms each of $i,i',j,j'$ and $k$.  Thus, again in order for $n$ to have an integer solution, $n$ must be odd.
\end{proof}

Our main theorem then follows.

\begin{theorem}
\label{thm:ndb}
$CCC_n$ is nicely distance-balanced if and only if $n$ is even.
\end{theorem}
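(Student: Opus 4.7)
The plan is to deduce the theorem essentially as a bookkeeping consequence of the three preceding propositions. The key observation is that for any edge $uv$ in $CCC_n$, the vertex set $V(CCC_n)$ of size $n2^n$ partitions as $W_{u,v} \sqcup W_{v,u} \sqcup W_u^v$. By Proposition \ref{prop:db} we already know $|W_{u,v}| = |W_{v,u}|$ for every edge, so the question of nice distance-balancing reduces to whether the common value $|W_{u,v}|$ is the same across all edges.

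First I would handle the cube-edge case uniformly: by Proposition \ref{prop:cubedge}, for every cube edge $uv$ we have $|W_{u,v}| = |W_{v,u}| = n2^{n-1}$ irrespective of the parity of $n$. So a necessary and sufficient condition for $CCC_n$ to be nicely distance-balanced is that every cycle edge $uv$ also satisfies $|W_{u,v}| = n2^{n-1}$, equivalently (by the partition above and $|W_{u,v}|=|W_{v,u}|$) that $|W_u^v| = 0$ on every cycle edge.

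For the forward implication, suppose $n$ is even. Then Proposition \ref{prop:cycleedge} gives $W_u^v = \emptyset$ for every cycle edge, so $|W_{u,v}| + |W_{v,u}| = n2^n$, which combined with Proposition \ref{prop:db} forces $|W_{u,v}| = n2^{n-1}$. Hence $|W_{u,v}|$ equals the cube-edge value, so $CCC_n$ is nicely distance-balanced with constant $k = n2^{n-1}$. For the converse, suppose $n$ is odd. By Proposition \ref{prop:cycleedge}, there exists a cycle edge $uv$ with $W_u^v \neq \emptyset$, so $|W_{u,v}| + |W_{v,u}| < n2^n$, and since these two quantities are equal we get $|W_{u,v}| < n2^{n-1}$. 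This differs from the cube-edge value $n2^{n-1}$ guaranteed by Proposition \ref{prop:cubedge}, so no single constant $k$ works uniformly across all edges and $CCC_n$ fails to be nicely distance-balanced.

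There is no substantive obstacle at this stage; the heavy lifting was already done in Proposition \ref{prop:cycleedge}, and the only care needed is to phrase the argument symmetrically in $u$ and $v$ and to quote the partition $V = W_{u,v} \sqcup W_{v,u} \sqcup W_u^v$ explicitly so that the transition from $W_u^v = \emptyset$ to $|W_{u,v}| = n2^{n-1}$ is transparent.
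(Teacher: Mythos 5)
Your proposal is correct and follows essentially the same route as the paper: it combines Proposition \ref{prop:cubedge} for cube edges with Proposition \ref{prop:cycleedge} and the partition $V = W_{u,v} \sqcup W_{v,u} \sqcup W_u^v$ (together with Proposition \ref{prop:db}) for cycle edges. The only difference is presentational — you make the partition explicit where the paper leaves it implicit.
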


\begin{proof}
We know by Proposition \ref{prop:cubedge} that if $uv$ is a cube edge, $|W_{u,v}| = |W_{v,u}| = n2^{n-1}$, and by Remark \ref{rem} we know $W_u^v = \emptyset$.  But if $uv$ is a cycle edge and $n$ is odd, by Proposition \ref{prop:cycleedge} we know that $W^v_u \neq \emptyset$.  Applying now Proposition \ref{prop:db}, we must have $|W_{u,v}| = |W_{v,u}| < n2^{n-1}$.  Hence for $n$ odd, $CCC_n$ is not nicely distance-balanced.

On the other hand, if $n$ is even and $uv$ is a cycle edge, then by Proposition \ref{prop:cycleedge} we know that $W^v_u = \emptyset$, so by Proposition \ref{prop:db} we must have $|W_{u,v}| = |W_{v,u}| = n2^{n-1}$.  Hence for $n$ even, $CCC_n$ is nicely distance-balanced.
\end{proof}

\begin{remark}
As distance-regular graphs are nicely distance-balanced, Theorem \ref{thm:ndb} proves that the graphs $CCC_n$, where $n$ is odd, cannot be distance-regular (see \cite{6} for a definition and background on distance-regular graphs).
\end{remark}

\end{document}